\newcommand{\ignore}[1]{}
\newcommand{\eqn}[1]{\eqref{#1}}
\newcommand{\C}{\mathcal{C}}
\newcommand{\D}{\Delta}
\newcommand{\F}{\mathbb{F}}
\newcommand{\Fbar}{\overline{\mathbb{F}}}
\newcommand{\Q}{\mathbb{Q}}
\newcommand{\Z}{\mathbb{Z}}
\newcommand{\R}{\mathbb{R}}
\newcommand{\M}{\mathsf{M}}
\newcommand{\sO}{{\widetilde{O}}}
\newcommand{\Jac}[1]{\ensuremath{J_{#1}}}
\newcommand{\End}{\ensuremath{\mathrm{End}}}
\newcommand{\Pic}[2][{}]{\ensuremath{\mathrm{Pic}^{{#1}}({#2})}}
\newcommand{\variety}[1]{\ensuremath{V\!\left({#1}\right)}}
\newcommand{\dualof}[1]{\ensuremath{{#1}^{\dagger}}}
\newcommand{\family}[1]{\ensuremath{\mathcal{#1}}}
\newcommand{\product}[3][{}]{\ensuremath{{#2}\!\times_{#1}\!{#3}}}
\newcommand{\XxX}[1]{\product{{#1}}{{#1}}}
\newcommand{\isom}{\cong}
\newcommand{\disc}{\mathrm{disc}}
\newcommand{\Mumford}[1]{{({#1})}}
\newcommand{\Mumfordbig}[1]{{\left({#1}\right)}}
\newcommand{\Tr}{\mathrm{Tr}}
\newcommand{\Nr}{\mathrm{N}}
\newcommand{\cc}{\mathfrak{c}}
\newcommand{\pp}{\mathfrak{p}}
\title{Counting Points on Genus 2 Curves\\with Real Multiplication}
\author{P.~Gaudry \and D.~Kohel \and B.~Smith}
\institute{}
\begin{document}

\maketitle

\begin{abstract}
    We present an accelerated Schoof-type point-counting algorithm
    for curves of genus~\(2\) equipped with an efficiently computable
    real multiplication endomorphism.
    Our new algorithm reduces the complexity of genus~\(2\) point
    counting over a finite field \(\F_{q}\) of large characteristic 
    from~\(\sO(\log^8 q)\) to~\(\sO(\log^5 q)\).
    Using our algorithm
    we compute a $256$-bit prime-order Jacobian,
    suitable for cryptographic applications,
    and also the order of a $1024$-bit Jacobian.
\end{abstract}

\section{Introduction}

Cryptosystems based on curves of genus $2$ offer per-bit security
and efficiency comparable with elliptic curve cryptosystems.
However, many of the computational problems
related to creating secure instances of genus~$2$ cryptosystems
are considerably more difficult
than their elliptic curve analogues.
Point counting---or, from a cryptographic point of view, 
computing the cardinality of a cryptographic group---offers 
a good example of this disparity,
at least for curves defined over large prime fields.
Indeed, while computing the order of a cryptographic-sized
elliptic curve with the Schoof--Elkies--Atkin algorithm 
is now routine, 
computing the order of a comparable genus $2$ Jacobian requires a significant 
computational effort~\cite{GaSc04,GaSc10}.

In this article
we describe a number of improvements to the classical Schoof--Pila algorithm
for genus $2$ curves with explicit and efficient real multiplication (RM).
For explicit RM curves over $\F_p$,
we reduce the complexity of Schoof--Pila from $\sO(\log^8 p)$
to $\sO(\log^5 p)$.
We applied a first implementation of our algorithms
to find prime-order Jacobians over 128-bit fields
(comparable to prime-order elliptic curves over 256-bit fields,
and therefore suitable for contemporary cryptographic applications).
Going further, 
we were able to compute the order of an RM Jacobian defined over a 512-bit prime field,
far beyond the cryptographic range.
(For comparison, the previous record computation in genus $2$
was over a 128-bit field.)

While these RM curves are special, they are not ``too special'':
Every ordinary genus $2$ Jacobian over a finite field
has RM; our special requirement is that this RM be known in advance 
and be efficiently computable.
The moduli of curves with RM by
a fixed ring form $2$-dimensional subvarieties (Humbert surfaces)
in the $3$-dimensional moduli space of all genus~$2$ curves.
We can generate random curves with the specified RM by choosing 
random points on an explicit model of the corresponding Humbert
surface~\cite{Gruenewald}. 
In comparison with elliptic curves, 
for which the moduli space is one-dimensional, this 
still gives an additional degree of freedom in the random curve selection.
To generate random curves with efficiently computable RM, we choose 
random curves from some known one and two-parameter families
(see~\S\ref{sec:families}).

Curves with efficiently computable RM 
have an additional benefit in cryptography:
the efficient endomorphism can be used to accelerate scalar multiplication
on the Jacobian, yielding faster encryption and
decryption~\cite{Kohel--Smith,Parketal,Takashima}.
The RM formul\ae{} are also compatible with
fast arithmetic based on theta functions~\cite{Gaudry-theta}.

\section{Conventional Point Counting for Genus $2$ Curves}

Let $\C$ be a curve of genus 2 over a finite field $\F_q$, of odd 
characteristic, defined by an affine model $y^2 = f(x)$, 
where $f$ is a squarefree polynomial of degree $5$ or $6$ over $\F_q$.
Let $\Jac{\C}$ be the Jacobian of $\C$; we assume $\Jac{\C}$ is ordinary and 
absolutely simple.  Points on $\Jac{\C}$ correspond to
degree-$0$ divisor classes on $\C$;
we use the Mumford representation for divisor classes
together with the usual Cantor-style composition and reduction algorithms
for divisor class arithmetic~\cite{Galbraith--Harrison--Mireles-Morales,Cantor-reduction}.
Multiplication by $\ell$ on $\Jac{\C}$ is denoted by $[\ell]$, and 
its kernel by $\Jac{\C}[\ell]$. More generally, if $\phi$ is an 
endomorphism of $\Jac{\C}$ then $\Jac{\C}[\phi] = \ker(\phi)$,
and if $S$ is a set of endomorphisms then $\Jac{\C}[S]$ denotes 
the intersection of $\ker(\phi)$ for $\phi$ in $S$.

\subsection{The Characteristic Polynomial of Frobenius}
\label{sec:Frobenius-charpoly}

We let $\pi$ denote the Frobenius endomorphism of $\Jac{\C}$,
with Rosati dual $\dualof{\pi}$ (so $\pi\dualof{\pi} = [q]$). 
The characteristic polynomial of $\pi$
has the form
\begin{equation}
\label{eq:Frobenius-charpoly}
  \chi(T) = T^4 - s_1 T^3 + (s_2 + 2q)\, T^2 - q s_1 T + q^2,
\end{equation}
where $s_1$ and $s_2$ are integers, and $s_2$ is a translation 
of the standard definition. 
The polynomial $\chi(T)$ determines the cardinality of $\Jac{\C}(\F_{q^k})$ 
for all $k$: in particular, $\#\Jac{\C}(\F_{q}) = \chi(1)$.
We refer to the determination of $\chi(T)$ as the {\it point counting problem}. 

The polynomial $\chi(T)$ is a {\it Weil polynomial}: 
all of its complex roots lie on the circle $|z| = \sqrt{q}$.
This implies the Weil bounds 
\begin{equation}
\label{eq:s_1s_2-Weil-bounds}
    |s_1| \le 4\sqrt{q}\quad \text{ and }\quad |s_2| \le 4q. 
\end{equation}
However, the possible values of $(s_1,s_2)$
do not fill the whole rectangle specified by the Weil bounds.
R\"uck~\cite[Theorem 1.1]{Ruck90}
shows that in fact $s_1$ and $s_2$ satisfy
\[
	s_1^2 - 4 s_2 \ge 0 \text{ and } s_2 + 4q \ge 2|s_1| ,
\]
so the possible values of $(s_1,s_2)$ are in the following domain:
\begin{center}
\scalebox{0.75}{
\begin{tikzpicture}[yscale=0.5]
\draw[very thick,->] (-4,0) -- (4,0) node[right] {$s_1/\sqrt{q}$};
\draw[very thick,->] (0,-4) -- (0,4) node[above] {$s_2/q$};
\path[thick,draw=blue, fill=blue!20, opacity=0.6]
(0,-4) -- (-4,4) --
plot[domain=-4:4] (\x,\x*\x/4) -- cycle;
\foreach \x in {-4,...,4} \draw[yscale=2] (\x,-0.1) -- (\x,0.1);
\foreach \x in {-4,...,4} \draw (-0.1,\x) -- (0.1,\x);
\end{tikzpicture}
}
\end{center}

\subsection{The Classical Schoof--Pila Algorithm for Genus $2$ Curves}
\label{sec:Schoof--Pila}

The objective of point counting is to compute $\chi(T)$, or equivalently 
the tuple of integers $(s_1,s_2)$.  When the characteristic of $\F_{q}$ 
is large, the conventional approach 
is to apply the Schoof--Pila algorithm 
as far as is practical, before passing to a baby-step giant-step 
algorithm if necessary (see~\S\ref{sec:BSGS}).  
The strategy of Schoof's algorithm 
and its generalizations is to compute the polynomials
$\chi_\ell(T) = \chi(T) \bmod (\ell)$ for sufficiently many primes (or prime 
powers) $\ell$ to reconstruct $\chi(T)$ using the Chinese Remainder Theorem.  

Since $\chi_\ell(T)$ is the characteristic polynomial of $\pi$ 
restricted to $\Jac{\C}[\ell]$ (see~\cite[Proposition 2.1]{Pila}),
we have 
$$
\chi_\ell(\pi)(D) = 0 \text{ for all } D \text{ in } \Jac{\C}[\ell]. 
$$

Conversely, to compute $\chi_\ell(T)$ we let $D$ be a generic 
element of $\Jac{\C}[\ell]$ (as in~\S\ref{sec:kernel-ideal} below),
compute the three points
\[
    (\pi^2 + [\bar{q}])^2(D),\ \ 
    (\pi^2 + [\bar{q}])\pi(D),\ \text{and}\ \   
    \pi^2(D),
\]
and then search for the coefficients $(\bar{s}_1,\bar{s}_2)$ of 
$\chi_\ell(T)$ in $(\Z/\ell\Z)^2$, for which the linear relation
\begin{equation}
\label{eq:conventional-ell-test}
     (\pi^2 + [\bar{q}])^2(D) 
    - [\bar{s}_1]\,(\pi^2 + [\bar{q}])\pi(D) 
    + [\bar{s}_2]\,\pi^2(D) = 0
\end{equation}
holds.
If the minimal polynomial of $\pi$ on $\Jac{\C}[\ell]$ is 
a proper divisor of $\chi_\ell(T)$---which occurs for at most 
a finite number of $\ell$ dividing $\disc(\chi)$---then 
the polynomial so determined is not unique, but 
$\chi_\ell(T)$ can be determined by deducing the correct 
multiplicities of its factors. 

Once we have computed $\chi_\ell(T)$ for sufficiently many $\ell$,
we reconstruct~$\chi(T)$ using the Chinese Remainder Theorem.
The Weil and R\"uck bounds together with
a weak version of the prime number theorem tell us how many 
$\ell$ are required: Pila notes in~\cite[\S1]{Pila}
that the set of $O(\log q)$ primes $\ell < 21\log q$ will suffice.
We analyse the complexity of the classical Schoof--Pila algorithm
in~\S\ref{sec:Schoof--Pila-complexity}.

\subsection{Endomorphisms and Generic Kernel Elements}
\label{sec:kernel-ideal}

We now recall how to contruct an effective version of a generic
$\ell$-torsion element. We present it in a slightly more general setting,
so that we can use this ingredient in the subsequent RM-specific
algorithm. Therefore, we show how to compute with a generic element
of the kernel of some endomorphism $\phi$ of $\Jac{\C}$, whereas $\phi$ is
just $[\ell]$ in the classical algorithm.

\begin{definition}\label{def:explicit}
    Fix an embedding $P \mapsto D_P$ of $\C$ in $\Jac{\C}$.
    We say that an endomorphism $\phi$ of~$\Jac{\C}$
    is \emph{explicit} if we can effectively compute polynomials 
    $d_0,d_1,d_2,e_0,e_1,$ and $e_2$ such that 
    if $P = (x_P,y_P)$ is a generic point of $\C$, then
    the Mumford representation of $\phi(D_P)$ is given by
    \begin{equation}
    \label{eq:endomorphism-image}
        \phi(D_P) 
        = 
        \Mumfordbig{
            x^2 + \frac{d_1(x_P)}{d_2(x_P)}x + \frac{d_0(x_P)}{d_2(x_P)} ,
            y - y_P\left(
                \frac{e_1(x_P)}{e_2(x_P)}x + \frac{e_0(x_P)}{e_2(x_P)}
            \right)
        }
        .
    \end{equation}
    The $d_0$, $d_1$, $d_2$, $e_0$, $e_1$, and $e_2$ 
    are called the \emph{$\phi$-division polynomials}.
\end{definition}

If $\phi$ is an explicit endomorphism, then 
we can use \eqn{eq:endomorphism-image} (extending $\Z$-linearly)
to evaluate $\phi(D)$ for general divisor classes $D$ in $\Jac{\C}$.
In the case $\phi = [\ell]$, the $[\ell]$-division polynomials
are the $\ell$-division polynomials of Cantor~\cite{Cantor-division}.
The $\phi$-division polynomials depend on the choice of embedding $P
\mapsto D_P$;
we will make this choice explicit when computing the $\phi$-division
polynomials for each of our families 
in~\S\ref{sec:families}.

To compute a generic element of $\Jac{\C}[\phi]$,
we generalize the approach of~\cite{GaSc04} 
(which computes generic elements of $\Jac{\C}[\ell]$).
The resulting algorithm is essentially the same as in~\cite[\S3]{GaSc04} 
(except for the parasite computation step, which we omit)
with $\phi$-division polynomials replacing $\ell$-division polynomials,
so we will only briefly sketch it here.

Let $D = \Mumford{x^2 + a_1x + a_0, y - (b_1x + b_0)}$
be (the Mumford representation of) a generic point of $\Jac{\C}$.
We want to compute a triangular ideal $I_\phi$
in $\F_{q}[a_1,a_0,b_1,b_0]$
vanishing on the nonzero elements of $\Jac{\C}[\phi]$.
The element~$D$ equals $D_{(x_1,y_1)} + D_{(x_2,y_2)}$,
where $(x_1,y_1)$ and $(x_2,y_2)$ are generic points of $\C$.
To find a triangular system
of relations on the $a_i$ and $b_i$
such that $D$ is in $\Jac{\C}[\phi]$
we solve for $x_1$, $y_1$, $x_2$, and $y_2$ in 
\[
    \phi(D_{(x_1,y_1)}) = -\phi(D_{(x_2,y_2)}) ,
\]
applying \eqn{eq:endomorphism-image} and using resultants 
computed with the evaluation--interpolation technique 
of~\cite[\S 3.1]{GaSc04}. 
We then resymmetrize as in~\cite[\S 3.2]{GaSc04}
to express the result in terms of the~$a_i$ and~$b_i$.
We can now compute with a ``generic'' element 
$\Mumford{x^2 + a_1x + a_0, y - (b_1x + b_0)}$
of~$\Jac{\C}[\phi]$
by reducing the coefficients modulo~$I_\phi$
after each operation.

Following the complexity analysis of~\cite[\S3.5]{GaSc04},
we can compute a triangular representation for $I_\phi$
in $O(\delta^2\M(\delta)\log\delta + \M(\delta^2)\log\delta)$
field operations,
where $\delta$ is the maximum among the degrees of 
the $\phi$-division polynomials,
and $\M(d)$ is the number of operations
required to multiply polynomials of degree $d$ over $\F_{q}$.
Using asymptotically fast multiplication algorithms,
we can therefore compute~$I_\phi$ in~$\sO(\delta^3)$ field operations.
The degree of $I_\phi$ is in $O(\delta^2)$;
with this triangular representation,
each multiplication modulo $I_\phi$ costs 
$\sO(\delta^2)$ field operations.

\subsection{Complexity of Classical Schoof--Pila Point Counting}
\label{sec:Schoof--Pila-complexity}

\begin{proposition}
    The complexity of the classical Schoof--Pila algorithm 
    for a curve of genus $2$ over $\F_{q}$ is in $\sO((\log q)^8)$.
\end{proposition}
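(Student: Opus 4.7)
The plan is to tally (i)~the number of primes $\ell$ used in the CRT loop, (ii)~the field-operation cost per prime, and (iii)~the bit cost of a single field operation in $\F_q$, and then multiply. The paper text already recalled Pila's observation that the set of primes $\ell < 21\log q$ suffices; this is a set of $O(\log q)$ primes, each of size $\ell = O(\log q)$.

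For a fixed $\ell$, I instantiate the generic-kernel construction of~\S\ref{sec:kernel-ideal} with $\phi = [\ell]$. Cantor's $[\ell]$-division polynomials in genus~$2$ have degrees $\delta = O(\ell^2)$, so the estimates at the end of~\S\ref{sec:kernel-ideal} let me build a triangular ideal $I_\ell$ in $\sO(\delta^3) = \sO(\ell^6)$ field operations, after which each multiplication modulo $I_\ell$ costs $\sO(\delta^2) = \sO(\ell^4)$ field operations. Applying $\pi$ to a Mumford coordinate modulo $I_\ell$ is a $q$-th power in the quotient ring, i.e.\ $O(\log q)$ modular multiplications, so one Frobenius evaluation costs $\sO(\ell^4 \log q)$ field operations; a constant number of such evaluations and additions suffices to form the three operands of~\eqn{eq:conventional-ell-test}. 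The brute-force search over $(\bar{s}_1,\bar{s}_2)\in(\Z/\ell\Z)^2$ tests $O(\ell^2)$ candidates at the cost of $O(\log \ell)$ modular multiplications each, contributing $\sO(\ell^6)$ field operations. For the finitely many primes $\ell \mid \disc(\chi)$ at which the minimal polynomial of $\pi$ on $\Jac{\C}[\ell]$ properly divides $\chi_\ell$, one additionally has to recover factor multiplicities, but this is an $O(1)$ correction with no asymptotic effect.

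Summing these contributions, the cost at a single prime is $\sO(\ell^6 + \ell^4 \log q) = \sO(\log^6 q)$ field operations once $\ell = O(\log q)$. Since an operation in $\F_q$ costs $\sO(\log q)$ bit operations under fast arithmetic, the per-prime bit cost is $\sO(\log^7 q)$; multiplying by the $O(\log q)$ primes, and noting that the final CRT reconstruction is negligible, yields the advertised $\sO(\log^8 q)$ bit complexity.

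The main subtlety is the bookkeeping between field-operation counts and bit-operation counts, and in particular remembering that Cantor's division polynomials for genus~$2$ have degree \emph{quadratic} in $\ell$ rather than linear. This quadratic blow-up of $\delta$ is precisely what lifts the analogous elliptic-curve complexity by two powers of $\log q$; any sharpening of the exponent here must come from attacking one of the three factors above, which is the motivation for the RM variant developed in the rest of the paper.
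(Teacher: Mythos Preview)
Your argument is correct and mirrors the paper's own proof almost step for step: same bound $\delta = O(\ell^2)$ for the division polynomials, same $\sO(\ell^6)$ for building $I_\ell$, same $\sO(\ell^4\log q)$ for the Frobenius images, and the same passage from $\sO(\ell^4(\ell^2+\log q))$ field operations to $\sO((\log q)^7)$ bit operations per prime and $\sO((\log q)^8)$ overall. The only cosmetic discrepancy is that the paper bounds the search for $(\bar s_1,\bar s_2)$ by $\sO(\ell^5)$ (via incremental multiples rather than a fresh square-and-multiply per candidate), whereas your brute-force estimate gives $\sO(\ell^6)$; since the $\sO(\ell^6)$ cost of constructing $I_\ell$ already dominates, this does not affect the conclusion.
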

\begin{proof}
    To determine $\chi(T)$,
    we need to compute $\chi_\ell(T)$ for $O(\log q)$ primes $\ell$
    in $O(\log q)$.
    To compute $\chi_\ell(T)$,
    we must first compute the $\ell$-division polynomials, 
    which have degrees in $O(\ell^2)$.
    We then compute the kernel ideal $I_\ell$;
    according to the previous subsection,
    the total cost is in $\sO(\ell^6)$ field operations.
    The cost of checking \eqn{eq:conventional-ell-test}
    against a generic element of $\Jac{\C}[\ell]$ 
    decomposes into the cost of computing Frobenius images of the generic
    element in $\sO(\ell^4\log q)$ and of finding the matching pair $(\bar s_1,
    \bar s_2)$ in $\sO(\ell^5)$ field operations.
    So the total complexity for computing $\chi_\ell(T)$ is
    in $\sO(\ell^4(\ell^2+\log q))$ field operations.
    In terms of bit operations, for each $\ell$ bounded by $O(\log q)$,
    we compute $\chi_\ell(T)$ in time $\sO((\log q)^7)$, and the result
    follows from the addition of these costs for all the different
    $\ell$'s.
\qed
\end{proof}

\subsection{Baby-Step Giant-Step Algorithms}
\label{sec:BSGS}

In practice, computing $\chi_\ell(T)$ with classical Schoof--Pila becomes 
impractical for large values of $\ell$. The usual approach is to carry out 
the Schoof--Pila algorithm to the extent possible, obtaining congruences 
for $s_1$ and $s_2$ modulo some integer $M$, before completing the calculation 
using a generic group algorithm such as baby-step giant-step (BSGS).
Our BSGS algorithm of choice is the low-memory parallelized variant of the 
Matsuo--Chao--Tsuji algorithm~\cite{GaScXX,Matsuo--Chao--Tsuji}.

The bounds in \eqn{eq:s_1s_2-Weil-bounds} imply that the 
search space of candidates for $(s_1,s_2)$ is in $O(q^{3/2})$, and  
a pure BSGS approach finds $(s_1,s_2)$ in time and space $\sO(q^{3/4})$.
However, when we apply BSGS after a partial Schoof--Pila computation,
we obtain a congruence for $(s_1,s_2)$ modulo~$M$. 
If $M < 8q$, then the size of the search space 
is reduced to $O(q^{3/2}/M^2)$, and the complexity for 
finding $(s_1,s_2)$ is reduced to $\sO(q^{3/4}/M)$.
For larger $M$, the value of $s_1$ is fully determined, and the problem 
is reduced to a one-dimensional search space of size $O(q/M)$ for 
which the complexity becomes $\sO(\sqrt{q/M})$.

\section{Point Counting in Genus $2$ with Real Multiplication}
\label{sec:RM}

By assumption, $\Jac{\C}$ is ordinary and simple, so $\chi(T)$ 
is an irreducible polynomial defining a quartic CM-field with 
real quadratic subfield $\Q(\sqrt{\D})$.  We say that $\Jac{\C}$ 
(and~$\C$) has \emph{real multiplication} (RM) by $\Q(\sqrt{\D}\,)$.
For a randomly selected curve, $\D$ is in $O(q)$; but in the sequel 
we consider families of curves with RM by $\Q(\sqrt{\D})$ for small 
$\D$ (= $5$ or $8$), admitting an explicit (in the sense of
Definition~\ref{def:explicit}) endomorphism 
$\phi$ such that 
\begin{equation}
\label{eq:real-subring}
    \Z[\phi] = \Q(\sqrt{\D}) \cap \End(\Jac{\C})
\end{equation}
(that is, $\Z[\phi]$ is the full real subring of $\End(\Jac{\C})$), and
\[
    \disc\left(\Z[\phi]\right) = \D .
\]
We presume that
the trace $\Tr(\phi)$ and norm $\Nr(\phi)$, such that 
$\phi^2 - \Tr(\phi)\phi + \Nr(\phi) = 0$, are known.
We also suppose that $\phi$ is \emph{efficient}, in the following sense:

\begin{definition}
    We say that an explicit endomorphism $\phi$ is
    \emph{efficiently computable}
    if the cost of evaluating $\phi$ at points of $\Jac{\C}(\F_{q})$
    requires only $O(1)$
    field operations
    (comparable to a few group operations in $\Jac{\C}$).
    In practice, 
    this means that 
    the $\phi$-division polynomials have small degree.
\end{definition}

The existence of an efficiently computable $\phi$ and knowledge 
of $\D$ allows us to make significant improvements to each stage 
of the Schoof--Pila algorithm. 
Briefly: 
in~\S\ref{sec:RM-char-poly} we use~$\phi$ to simplify the testing 
procedure for each~$\ell$;
in~\S\ref{sec:split-prime} we show that when~$\ell$ splits in~$\Z[\phi]$, 
we can use $\phi$ to obtain a radical reduction in complexity for 
computing $\chi_\ell(T)$; and 
in~\S\ref{sec:search-space} we show that knowing an effective $\phi$ allows 
us to use many fewer primes $\ell$.

\subsection{The RM Characteristic Polynomial}
\label{sec:RM-Frobenius-charpoly}

Let $\psi = \pi+\pi^\dagger$;
we consider $\Z[\psi]$, 
a subring of the real quadratic subring of $\End(\Jac{\C})$.
The characteristic polynomial of $\psi$ is the {\it real Weil polynomial}
\begin{equation}
\label{eq:real-charpoly}
    \xi(T) = T^2 - s_1 T + s_2;
\end{equation}
the discriminant of $\Z[\psi]$ is $\D_0 = s_1^2 - 4 s_2$. 
The analogue for $(s_1,\D_0)$ of R\"uck's bounds is
\begin{equation}
\label{eq:s_1d_0-Ruck-bounds}
    (|s_1| - 4\sqrt{q})^2 \ge \D_0 = s_1^2 - 4 s_2 \ge 0 .
\end{equation}
Equation \eqn{eq:real-subring} implies that
$\Z[\psi]$ is contained in $\Z[\phi]$,
so there exist integers $m$ and $n$ such that
\begin{equation}
\label{eq:RM-psi-relation}
    \psi = m + n\phi .
\end{equation}
Both $s_1$ and $s_2$ are determined by $m$ and $n$: we have
\begin{equation}
\label{eq:s_1s_2-from-nm}
    s_1 = \Tr(\psi) = 2m + n\Tr(\phi) 
    \quad \text{and} \quad 
    s_2 = \Nr(\psi) = (s_1^2 - n^2\D)/4.
\end{equation}
In fact \(n\) is the conductor of \(\Z[\psi]\) in \(\Z[\phi]\) 
up to sign: \( |n| = [\Z[\phi]:\Z[\psi]] \),
and hence
\[
    \D_0 = \disc(\Z[\psi]) = s_1^2 - 4 s_2 = n^2 \D .
\]
The square root of the bounds 
in \eqn{eq:s_1d_0-Ruck-bounds} gives 
bounds on $s_1$ and~$n$:
\[
    4\sqrt{q} - |s_1| \ge \sqrt{\D_0} = |n| \sqrt{\D} \ge 0 ;
\]
In particular, 
\( |s_1| \le 4\sqrt{q} \)
and
\( |n| \le 4\sqrt{q/\D} \).
Applying the relation in~\eqn{eq:s_1s_2-from-nm},
we have the bounds
\begin{equation}
\label{eq:m-n-bounds}
    |m| \le 2(|\Tr(\phi)| + \sqrt{\D})\sqrt{q/\D}
    \quad \text{and}\quad 
    |n| \le 4\sqrt{q/\D}
    .
\end{equation}
Both $|m|$ and $|n|$ are in $O(\sqrt{q})$.

\subsection{An Efficiently Computable RM Relation}
\label{sec:RM-char-poly}

We can use our efficiently computable endomorphism $\phi$ 
to replace the relation
of \eqn{eq:conventional-ell-test}
with a more efficiently computable alternative.
Multiplying \eqn{eq:RM-psi-relation} through by $\pi$,
we have
\[
    \psi\pi = \pi^2 + [q] = m\pi + n\phi\pi.
\]
We can therefore compute $\bar{m} = m\bmod\ell$ and $\bar{n} = n\bmod\ell$
by letting $D$ be a generic $\ell$-torsion point,
computing the three points
\[
    (\pi^2 + [\bar{q}])(D) ,
    \ \  
    \pi(D) ,
    \ \ \text{and} \ \ 
    \phi\pi(D) ,
\]
and then 
searching for the $\bar{m}$ and $\bar{n}$ in $\Z/\ell\Z$
such that 
\begin{equation}
\label{eq:RM-ell-test}
  (\pi^2 + [\bar{q}])(D) - [\bar{m}]\pi(D) - [\bar{n}]\phi\pi(D) = 0
\end{equation}
holds; we can find such an $\bar{m}$ and $\bar{n}$ in $O(\ell)$ group
operations.

Solving \eqn{eq:RM-ell-test} rather than 
\eqn{eq:conventional-ell-test} has several advantages.
First, computing 
\((\pi^2 + [\bar{q}])(D)\), \(\pi(D)\), and \(\phi\pi(D)\)
requires only two applications of Frobenius,
instead of the four required to compute 
\((\pi^2 + [\bar{q}])^2(D)\),
\((\pi^2 + [\bar{q}])\pi(D)\),
and \(\pi^2(D)\)
(and Frobenius applications are costly in practice).
Moreover, either $s_2$ needs to be determined in $O(q)$, or else
the value of $n$ in \eqn{eq:conventional-ell-test} 
leaves a sign ambiguity for each prime $\ell$, because only 
$n^2 \bmod \ell$ can be deduced from $(\bar{s}_1, \bar{s}_2)$. 
In contrast, \eqn{eq:RM-ell-test} determines $n$ directly.

\subsection{Exploiting Split Primes in $\Q(\sqrt{\D})$}
\label{sec:split-prime}

Let $\Z[\phi] \subset \End(\Jac{\C})$ be an RM order in $\Q(\phi) \isom \Q(\sqrt{\D})$.  
Asymptotically, half of all primes $\ell$ split: $(\ell) = \pp_1 \pp_2$  
in $\Z[\phi]$, where $\pp_1 + \pp_2 = (1)$ (and this carries over to 
prime powers $\ell$). 
This factorization gives a decomposition of the $\ell$-torsion
\[
    \Jac{\C}[\ell] = \Jac{\C}[\pp_1] \oplus \Jac{\C}[\pp_2].
\]
In particular, any $\ell$-torsion point $D$ can be uniquely expressed 
as a sum $D = D_1 + D_2$ where $D_i$ is in $\Jac{\C}[\pp_i]$.

According to the Cohen--Lenstra heuristics~\cite{CohenLenstra}, more 
than 75\% of RM fields have class number 1; 
in each of the explicit RM families in \S\ref{sec:families},
the order $\Z[\phi]$ has class number 1.
All ideals are principal in such an order,
so we may find a generator 
for each of the ideals $\pp_i$.  Furthermore, the following lemma shows 
that we can find a generator which is not too large.

\begin{lemma}
\label{lemma:generator-reduction}
    If $\pp$ is a principal ideal of norm $\ell$ in a real quadratic order $\Z[\phi]$,
    then there exists an effectively computable generator of $\pp$ 
    with coefficients in $O(\sqrt{\ell})$.
\end{lemma}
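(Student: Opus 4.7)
The plan is to take any generator $\alpha_0$ of $\pp$ and reduce it to small size by multiplying through by an appropriate power of the fundamental unit of $\Z[\phi]$. Since $\Z[\phi]$ has unit group of rank one, generated by $\pm 1$ and a fundamental unit $\epsilon$, the set of all generators of $\pp$ is the one-parameter family $\{\pm\epsilon^k\alpha_0 : k\in\Z\}$. I would first compute some initial generator $\alpha_0$ by standard effective methods for the principal ideal problem in a real quadratic order; in the families of \S\ref{sec:families}, where $\D$ is a fixed small constant, this amounts to enumerating solutions of the Pell-type equation $\Nr(a+b\phi)=\pm\ell$ and is essentially free. I would simultaneously compute $\epsilon$ once and for all from the continued-fraction expansion of $\phi$.

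The heart of the argument is a size reduction within this one-parameter family. Let $\alpha^{(1)},\alpha^{(2)}$ denote the two real embeddings, normalized so that $|\epsilon^{(1)}|>1>|\epsilon^{(2)}|$. Every generator $\alpha$ of $\pp$ satisfies $|\alpha^{(1)}|\,|\alpha^{(2)}|=|\Nr(\alpha)|=\ell$, and multiplication by $\epsilon^k$ scales the two embeddings by $|\epsilon^{(1)}|^{\pm k}$. Choosing $k=\lceil\log_{|\epsilon^{(1)}|}(\sqrt{\ell}/|\alpha_0^{(1)}|)\rceil$ places $|\alpha^{(1)}|$ in $[\sqrt{\ell},\,|\epsilon^{(1)}|\sqrt{\ell}]$, and the product identity then forces $|\alpha^{(2)}|\in[\sqrt{\ell}/|\epsilon^{(1)}|,\,\sqrt{\ell}]$. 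Both embeddings of the reduced generator $\alpha=\epsilon^k\alpha_0$ are therefore $O(\sqrt{\ell})$, with the implied constant depending only on the fixed unit $\epsilon$.

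Finally, writing $\alpha=a+b\phi$ with $a,b\in\Z$, the identities $\alpha-\bar\alpha=\pm b\sqrt{\D}$ and $\alpha+\bar\alpha=2a+b\Tr(\phi)$ express $|a|$ and $|b|$ as explicit linear combinations of $|\alpha^{(1)}|$ and $|\alpha^{(2)}|$, with constants depending only on the fixed data $\D$ and $\Tr(\phi)$; the desired bounds $|a|,|b|=O(\sqrt{\ell})$ follow at once. There is no real obstacle in the argument itself; the only step with nontrivial algorithmic content is the initial computation of $\alpha_0$, but since we work in a real quadratic order of constant discriminant this cost is negligible and, crucially, it does not affect the claimed size bound on the output.
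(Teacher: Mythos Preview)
Your proposal is correct and follows essentially the same approach as the paper: start from an arbitrary generator, multiply by the unique power of the fundamental unit that balances the two real embeddings near $\sqrt{\ell}$, and then recover the coefficient bounds from $\alpha+\bar\alpha=2a+b\Tr(\phi)$ and $\alpha-\bar\alpha=\pm b\sqrt{\D}$. The only cosmetic difference is that the paper centers the reduction symmetrically (so each $|\beta_i|\le e^{R/2}\sqrt{\ell}$) whereas your choice of $k$ lands $|\alpha^{(1)}|$ in $[\sqrt{\ell},|\epsilon^{(1)}|\sqrt{\ell}]$; both give $O(\sqrt{\ell})$ with constants depending only on the regulator.
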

\begin{proof}
    Let $\alpha$ be a generator of $\pp$, 
    and $\varepsilon$ a fundamental unit of $\Z[\phi]$.  
    Let $\gamma \mapsto \gamma_1$
    and $\gamma \mapsto \gamma_2$
    be the two embeddings of $\Z[\phi]$ in $\R$,
    indexed so that
    $|\alpha_1| \ge |\alpha_2|$ and $|\varepsilon_1| > 1$ 
    (replacing $\varepsilon$ with $\varepsilon^{-1}$ if necessary).
    Then $R = \log(|\varepsilon_1|)$ is the regulator of $\Z[\phi]$. 
    Set $\beta = \varepsilon^{-k}\alpha$,
    where $k = [\log(|\alpha_1/\sqrt{\ell}|)/R]$;
    then $\beta = a + b\phi$ is a new generator for $\pp$ such that 
    \[
        -\frac{1}{2} 
        \le 
        \frac{\log(|\beta_i/\sqrt{\ell}|)}{R} 
        \le 
        \frac{1}{2}\cdot 
     \]   
    From the preceding bounds, 
    \( |\beta_1+\beta_2| = |2a + b\Tr(\phi)| \)
    and
    \( |\beta_1-\beta_2| = |b\sqrt{\D}| \)
    are bounded by $2e^{R/2}\sqrt{\ell}$.  
    Since $\Tr(\phi)$, $\D$ and $R$ are 
    fixed constants, $|a|$ and $|b|$ are in $O(\sqrt{\ell})$.
    The ``effective'' part of the result follows from classical
    algorithms for quadratic fields.
    \qed
\end{proof}

\begin{lemma}
\label{lemma:alpha-division-polynomials}
    Let $\Jac{\C}$ be the Jacobian of a genus $2$ curve
    over a finite field $\F_q$ 
    with an efficiently computable RM endomorphism $\phi$.  
    There exists an algorithm which, given a principal ideal $\pp$ of 
    norm $\ell$ in $\Z[\phi]$, 
    computes an explicit generator $\alpha$ of $\pp$ 
    and the $\alpha$-division polynomials in $O(\ell)$ field operations.
\end{lemma}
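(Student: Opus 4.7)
The plan is to combine Lemma~\ref{lemma:generator-reduction} with an evaluation--interpolation strategy. First, applying Lemma~\ref{lemma:generator-reduction} to $\pp$ produces a generator $\alpha = a + b\phi$ of $\pp$ with $|a|, |b| \in O(\sqrt{\ell})$; since $\phi$, $\D$, and the fundamental unit of $\Z[\phi]$ are constants of the ambient family (not depending on $\ell$), this step is essentially free, running in time polylogarithmic in $\ell$ via the standard algorithms for real quadratic orders. From this decomposition, $\alpha$ acts on divisors by $\alpha(D_P) = [a]\,D_P + [b]\,\phi(D_P)$, and evaluating $\phi$ at a specific point costs $O(1)$ field operations by efficiency.

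Next, I would establish the crucial degree bound: the six $\alpha$-division polynomials of \eqn{eq:endomorphism-image} have degree $O(\ell)$ in $x_P$. Since $\deg(\alpha) = \Nr_{\Q(\phi)/\Q}(\alpha)^2 = \ell^2$, the kernel $\Jac{\C}[\alpha]$ has only $\ell^2$ geometric points (compared to $\ell^4$ for $\Jac{\C}[\ell]$). A resultant computation analogous to Cantor's then shows that the degrees of $d_0, d_1, d_2, e_0, e_1, e_2$ scale as $O(\sqrt{\deg\alpha}) = O(\ell)$, as opposed to the $O(\ell^2)$ bound in the classical $[\ell]$ case. So the target output itself comprises only $O(\ell)$ field elements.

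The actual computation then proceeds by evaluation and interpolation. Choose $O(\ell)$ sample abscissae $x_i$ in a fixed extension $\F_{q^k}$, recover $y_i$ from $y_i^2 = f(x_i)$, and for each sample evaluate
\[
    \alpha(D_{(x_i,y_i)})
    = [a]\,D_{(x_i,y_i)} + [b]\,\phi(D_{(x_i,y_i)})
\]
in $\Jac{\C}(\F_{q^k})$ via double-and-add, at a cost of $O(\log \ell)$ group operations per sample, i.e.\ $O(\log \ell)$ field operations per sample. Fast polynomial interpolation on the resulting values then reconstructs the degree-$O(\ell)$ numerators and common denominators in \eqn{eq:endomorphism-image} in a further $\sO(\ell)$ operations.

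The main obstacle is the structural degree bound of the second paragraph: one must verify that the Mumford coordinates of a generic $\alpha$-torsion divisor satisfy relations of degree $O(\ell)$ in $x_P$ (rather than $O(\ell^2)$ as for the full $\ell$-torsion), which requires some care with the resultants defining the image ideal of the composition $\alpha(D_{(x_1,y_1)}) = -\alpha(D_{(x_2,y_2)})$. A secondary technicality is the handling of exceptional sample points $x_i$ where the double-and-add passes through a degenerate Cantor composition; but these form a zero-dimensional set of cardinality $O(\ell)$ and are absorbed by a constant oversampling factor or by enlarging $k$ slightly, so the total cost remains $\sO(\ell)$, matching the stated $O(\ell)$ bound up to logarithmic factors.
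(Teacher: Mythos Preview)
Your approach is correct in outline but takes a different route from the paper. Both proofs begin identically, invoking Lemma~\ref{lemma:generator-reduction} to write $\alpha = a + b\phi$ with $|a|,|b|\in O(\sqrt{\ell})$. From there the paper proceeds symbolically: Cantor's recurrences yield the $[a]$- and $[b]$-division polynomials directly, and since $a,b\in O(\sqrt{\ell})$ these already have degree $O(a^2)=O(b^2)=O(\ell)$ and are computed in $O(\ell)$ field operations. Then $\alpha(D_P)=[a]D_P+[b]\phi(D_P)$ is obtained by one application of the efficiently computable $\phi$ followed by one Cantor addition of divisors with degree-$O(\ell)$ coefficients, preserving the $O(\ell)$ bound. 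This gives the degree estimate and the algorithm simultaneously, with no log factors.

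Your evaluation--interpolation strategy also works, but it is heavier: it needs a separate structural argument for the $O(\ell)$ degree bound (which you sketch via $\deg\alpha=\ell^2$ and an appeal to a Cantor-style resultant, though this step is left somewhat vague), it incurs an extra $\log\ell$ factor from double-and-add at each sample, and it requires managing degenerate sample points and a base-field extension. None of these is fatal, but the paper's direct symbolic composition sidesteps all of them. The main insight you are missing is that the bound $a,b\in O(\sqrt{\ell})$ already controls the degrees of the \emph{integer} division polynomials for $[a]$ and $[b]$, so there is no need to go through $\#\Jac{\C}[\alpha]$ or to interpolate.
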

\begin{proof}
    By Lemma~\ref{lemma:generator-reduction}, we can compute a generator 
    $\alpha = [a] + [b]\phi$ with $a$ and $b$ in $O(\sqrt{\ell})$.  
    The $[a]$- and $[b]$-division polynomials have degrees in $O(\ell)$, 
    and can be determined in $O(\ell)$ field operations. The division 
    polynomials for the sum $\alpha = [a] + [b]\phi$ require one sum 
    and one application of $\phi$; and since $\phi$ is efficiently
    computable, this increases the division polynomial degrees and 
    computing time by at most a constant factor.
\qed
\end{proof}
We can now state the main theorem for RM point counting. 
\begin{theorem}
\label{theorem:RM-ptc-complexity}
    There exists an algorithm for the point counting problem in a family 
    of genus~2 curves with efficiently computable RM of class number 1,
    whose complexity is in $\sO((\log q)^5)$.
\end{theorem}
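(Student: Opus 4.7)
My plan is to combine the three RM-specific improvements from Sections~\ref{sec:RM-char-poly}, \ref{sec:split-prime}, and \ref{sec:search-space}: swap the classical relation \eqref{eq:conventional-ell-test} for its RM analogue \eqref{eq:RM-ell-test}, work only with primes that split in $\Z[\phi]$, and exploit the smaller target $(m,n)\in\Z^2$ from the bounds \eqref{eq:m-n-bounds}. Since $|m|,|n|\in O(\sqrt{q})$, it suffices to determine $(m,n)\bmod L$ for some $L\in\Theta(\sqrt{q})$ and then reconstruct by CRT; by the prime number theorem combined with the Chebotarev density $1/2$ of split primes in $\Z[\phi]$, the split primes $\ell\le B$ with $B=O(\log q)$ already achieve $\prod\ell>L$, so the algorithm will handle only $O(\log q/\log\log q)$ primes in total.

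For a split prime $\ell=\pp_1\pp_2$ with $\Z[\phi]/\pp_i\isom\F_\ell$, I would first invoke Lemma~\ref{lemma:generator-reduction} to produce small generators $\alpha_i=a_i+b_i\phi$ with $|a_i|,|b_i|\in O(\sqrt{\ell})$, then Lemma~\ref{lemma:alpha-division-polynomials} to obtain the $\alpha_i$-division polynomials of degree $\delta=O(\ell)$ in $O(\ell)$ field operations. Feeding $\delta=O(\ell)$ into the generic construction of Section~\ref{sec:kernel-ideal} yields a triangular kernel ideal $I_{\alpha_i}$ describing a generic point $D\in\Jac{\C}[\pp_i]$ in $\sO(\ell^3)$ field operations, with each subsequent operation modulo $I_{\alpha_i}$ costing $\sO(\ell^2)$. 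The crucial observation is that $\phi$ acts on $\Jac{\C}[\pp_i]$ as the scalar $\lambda_i\in\F_\ell$ (the image of $\phi$ in $\Z[\phi]/\pp_i$), so the RM relation \eqref{eq:RM-ell-test} specialises on $\Jac{\C}[\pp_i]$ to the one-dimensional relation
\[
    (\pi^2+[\bar{q}])(D)=[c_i]\,\pi(D),
    \qquad
    c_i:=m+n\lambda_i\bmod\ell.
\]
A baby-step giant-step search then recovers $c_i$ in $O(\sqrt{\ell})$ operations modulo $I_{\alpha_i}$, and the $2\times 2$ Vandermonde system in $\lambda_1,\lambda_2$ returns $(m,n)\bmod\ell$ directly, with none of the sign ambiguity that dogs the classical approach.

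The dominant costs per split prime $\ell$ are: building the two $I_{\alpha_i}$ at $\sO(\ell^3)$; applying Frobenius $O(\log q)$ times modulo $I_{\alpha_i}$ at $\sO(\ell^2\log q)$; and the BSGS search at $\sO(\ell^{5/2})$. The total per prime is $\sO(\ell^3+\ell^2\log q)$ field operations; summing over split primes $\ell\le O(\log q)$ yields $\sO((\log q)^4)$ field operations, and each field operation in $\F_q$ costs $\sO(\log q)$ bit operations, giving the claimed $\sO((\log q)^5)$ bit complexity. The finitely many bad primes (those dividing $\disc(\chi)$ or for which the minimal polynomial of $\pi$ on $\Jac{\C}[\ell]$ properly divides $\chi_\ell$) are handled by the multiplicity fallback noted in Section~\ref{sec:Schoof--Pila}, contributing only $O(1)$ extra work.

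The step I expect to be most delicate is the degree accounting underlying Lemma~\ref{lemma:alpha-division-polynomials}: the entire improvement hinges on pushing $\delta$ from $O(\ell^2)$, as for classical $\ell$-division polynomials, down to $O(\ell)$. This requires checking that forming $\alpha=[a]+[b]\phi$ via Cantor composition of the Mumford representations of $[a](D_P)$ and $[b](\phi(D_P))$ does not inflate the degree past $O(\ell)$, which uses essentially the efficiency of $\phi$ (its division polynomials are of $O(1)$ degree) together with the reduction of $|a|,|b|$ to $O(\sqrt{\ell})$. Once that degree bound is in hand, the remaining ingredients---density of split primes, BSGS, Vandermonde inversion, and CRT reconstruction of $(m,n)$ followed by $(s_1,s_2)$ via \eqref{eq:s_1s_2-from-nm}---are routine.
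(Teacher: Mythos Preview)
Your proposal is correct and follows essentially the same route as the paper's proof: split $(\ell)=\pp_1\pp_2$, use Lemmas~\ref{lemma:generator-reduction} and~\ref{lemma:alpha-division-polynomials} to get $\alpha_i$-division polynomials of degree $O(\ell)$, work modulo the kernel ideals $I_{\alpha_i}$ of degree $O(\ell^2)$, reduce \eqref{eq:RM-ell-test} on $\Jac{\C}[\pp_i]$ to a one-dimensional discrete logarithm $D_i''=[c_i]D_i'$, and recombine $(c_1,c_2)$ into $(\bar m,\bar n)$. The paper phrases the recombination as CRT in $\Z[\phi]/(\ell)$ rather than a Vandermonde system, and does not invoke BSGS for the discrete log (since the $\sO(\ell^3)$ cost of building $I_{\alpha_i}$ already dominates), but these are cosmetic; your per-prime cost $\sO(\ell^3+\ell^2\log q)$ matches the paper's $\sO(\ell^2(\ell+\log q))$ exactly.
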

\begin{proof}
    Let $\Jac{\C}$ be a Jacobian in a family with efficiently computable 
    RM by $\Z[\phi]$. 
    Suppose that $\ell$ is prime, $(\ell) = \pp_1 \pp_2$ in $\Z[\phi]$, 
    and that the $\pp_i$ are principal.
    By Lemma~\ref{lemma:alpha-division-polynomials} we can compute 
    representative $\alpha$-division polynomials for $\Jac{\C}$ for 
    each $\pp$ in $\{\pp_1,\pp_2\}$ in time $\sO(\ell)$, hence 
    generic points $D_i$ in $\Jac{\C}[\pp_i]$.

    We recall that \eqn{eq:RM-ell-test} is the homomorphic 
    image under $\pi$ of the equation 
    $$
    \psi(D) - [\bar{m}](D) - [\bar{n}]\phi(D) = 0.
    $$
    When applied to $D_i$ in $\Jac{\C}[\pp_i]$, both $\psi$ and $\phi$ 
    act as elements of $\Z[\phi]/\pp_i \isom \Z/\ell\Z$. Moreover 
    $\bar{x}_i = \phi \bmod \pp_i$ is known, and it remains to determine 
    $\bar{y}_i = \psi \bmod \pp_i$ by means of 
    the discrete logarithm
    $$
    \psi(D_i) = [\bar{y}_i](D_i) = [\bar{m}+\bar{n}\bar{x}_i](D_i)
    $$
    in the cyclic group $\langle D_i \rangle \isom \Z/\ell\Z$.  
    The application of $\pi$ transports this discrete logarithm 
    problem to that of solving for $\bar{y}_i$ in
    $$
    D_i'' = [\bar{y}_i]D',
    $$
    where $D_i' = \pi(D_i)$ and $D_i'' = (\pi^2+[\bar{q}])(D_i)$.
    By the CRT, from $(\bar{y}_1,\bar{y}_2)$ in $(\Z/\ell\Z)^2$ 
    we recover $\bar{y}$ in $\Z[\phi]/(\ell)$, from which we solve 
    for $(\bar{m},\bar{n})$ in $(\Z/\ell\Z)^2$ such that 
    $$
    \bar{y} = \bar{m} + \bar{n}\phi \in \Z[\phi]/(\ell).
    $$
    The values of $(\bar{s}_1,\bar{s}_2)$ are then recovered from 
    \eqn{eq:s_1s_2-from-nm}.

    The ring $\Z[\phi]$ is fixed, so as $\log q$ goes to infinity
    we find that 50\% of all primes $\ell$ split in $\Z[\phi]$
    by the Chebotarev density theorem.
    It therefore suffices to consider split 
    primes in $O(\log q)$. 
    In comparison with the conventional algorithm
    presented in \S\ref{sec:Schoof--Pila}, we reduce from 
    computation modulo the ideal for $\Jac{\C}[\ell]$ of degree in
    $O(\ell^4)$, to computation 
    modulo the ideals for $\Jac{\C}[\pp_i]$ of degree in $O(\ell^2)$.
    This means a reduction from $\sO(\ell^4(\ell^2+\log q))$ 
    to $\sO(\ell^2(\ell+\log q))$ field operations for the determination of 
    each $\chi_{\ell}(T)$,  
    giving the stated reduction in 
    total complexity from $\sO((\log q)^8)$ to $\sO((\log q)^5)$.
\qed
\end{proof}

\begin{remark}
\label{remark:RM-Schoof-Pila-reduction}
    Computing $(m,n)$ instead of $(s_1,s_2)$ allows us to reduce the number 
    of primes~$\ell$ to be considered by about a half, since
    by~\eqn{eq:m-n-bounds} their product 
    needs to be in $O(\sqrt{q})$ instead of $O(q)$.  
    While this changes only the constant in the asymptotic complexity 
    of the algorithm, it yields a significant improvement in practice.
\end{remark}

\begin{remark}
\label{remark:RM-class-number-not-1}
If $\Z[\phi]$ does not have class number $1$, and if $(\ell) = \pp_1\pp_2$ 
where the $\pp_i$ are not principal, then we may 
use a small complementary ideal $(c) = \cc_1\cc_2$ such that $\cc_i\pp_i$ 
are principal in order to apply Lemma~\ref{lemma:alpha-division-polynomials}
to a larger proportion of small ideals.  
Moreover, if $(\bar{m},\bar{n})$ is known modulo $c$, this can be used 
to reduce the discrete log problem modulo $\ell$. 
Again, since a fixed positive density $1/2h$ of primes are both split 
and principal, where $h$ is the class number of $\Z[\phi]$, this does 
not affect the asymptotic complexity.  
Moreover, the first occurrence of a nontrivial class group is for 
$\D = 65$, beyond the current range for which an explicit RM 
construction is currently known.
\end{remark}

\subsection{Shrinking the BSGS Search Space}
\label{sec:search-space}

In the context of the conventional Schoof-Pila algorithm, we need to 
find $s_1$ in $O(\sqrt{q})$ and $s_2$ in $O(q)$.
However, \eqn{eq:RM-psi-relation}, and the effective form of 
\eqn{eq:RM-ell-test} (valid for all points $D$ of $\Jac{\C}$), 
replaces the determination of $(s_1,s_2)$ with the tuple $(m,n)$ of 
integers in $O(\sqrt{q})$.  
As a result, the search space is reduced from $O(q^{3/2})$ to $O(q)$.
Thus the BSGS strategy can find $(m,n)$ (which determines $(s_1,s_2)$) 
in time and space $O(\sqrt{q})$, compared with $O(q^{3/4})$ when searching 
directly for $(s_1,s_2)$.

As in the general case, if one knows $(m,n)$ modulo an integer $M$, then 
the area of the search rectangle is reduced by a factor of $M^2$, so we 
find the tuple $(m,n)$ in $O(\sqrt{q}/M)$ group operations. 
Contrary to the general case of \S\ref{sec:BSGS}, since $m$ and $n$ have 
the same order of magnitude, the speed-up is always by a factor of $M$.

\section{Examples of Families of Curves with Explicit RM}
\label{sec:families}

We now exhibit some families of curves and efficient RM endomorphisms
that can be used as sources of inputs to our algorithm.

\subsection{Correspondences and Endomorphisms}
\label{sec:correspondences}

To give a concrete representation for endomorphisms of $\Jac{\C}$,
we use \emph{correspondences}: 
that is, divisors on the surface $\XxX{\family{C}}$.
Suppose that $\family{R}$ is a curve on $\XxX{\family{C}}$,
and 
let $\pi_1: \family{R} \to \family{C}$ and $\pi_2: \family{R} \to \family{C}$
be the restrictions to $\family{R}$ 
of the natural projections from $\XxX{\family{C}}$
onto its first and second factors.
We have a pullback homomorphism
\(
    (\pi_1)^*: \Pic{\family{C}} \to \Pic{\family{R}}
\),
defined by
\[
    (\pi_1)^*\Big(\Big[
        \sum_{P \in \family{C}(\Fbar_{q})}\!\!\! n_P P\ 
    \Big]\Big)
    =
    \Big[
        \sum_{P \in \family{C}(\Fbar_{q})}\!\!\! n_P\!\!\!
        \sum_{Q \in \pi_{1}^{-1}(P)}\!\!\! Q\ 
    \Big]
    ,
\]
where the preimages $Q$ are counted with the appropriate multiplicities.
(A standard moving lemma shows that 
we can always choose divisor class representatives
so that each $\pi^{-1}(P)$ is zero-dimensional.)
We also have a pushforward homomorphism
\(
    (\pi_2)_*: \Pic{\family{R}} \to \Pic{\family{C}}
\),
defined by
\[
    (\pi_2)_*\Big(\Big[
        \sum_{Q \in \family{R}(\Fbar_{q})}\!\!\! n_Q Q\ 
    \Big]\Big)
    =  
    \Big[ \sum_{Q \in \family{R}(\Fbar_{q})}\!\!\! n_Q \pi_2(Q)\  \Big]
    .
\]
Note that $(\pi_1)^*$ maps $\Pic[n]{\family{C}}$ 
into $\Pic[(n\deg\pi_1)]{\family{R}}$
and $(\pi_2)_*$ maps $\Pic[n]{\family{R}}$
into $\Pic[n]{\family{C}}$
for all $n$.
Hence $(\pi_2)_*\circ(\pi_1)^*$
maps $\Pic[0]{\family{C}}$ into
$\Pic[0]{\family{C}}$,
so we have an \emph{induced endomorphism}
\[
    \phi = (\pi_2)_*\circ(\pi_1)^* : \Jac{\family{C}} \to \Jac{\family{C}} 
    .
\]

We write $x_1,y_1$ and $x_2,y_2$ for the coordinates on the first and second
factors of $\XxX{\family{C}}$, respectively
(so $\pi_i(x_1,y_1,x_2,y_2) = (x_i,y_i)$).
In our examples,
the correspondence
$\family{R}$ 
will be defined by two equations:
\[
    \family{R} 
    = 
    \variety{A(x_1,x_2),B(x_1,y_1,x_2,y_2)}
    .
\]
On the level of divisors,
the image of a generic point $P = (x_P,y_P)$ of $\family{C}$
(that is, a generic prime divisor)
under the endomorphism
\(\phi\)
is given by
\[
    \phi: 
    (x_P,y_P)
    \longmapsto
    \variety{A(x_P,x),B(x_P,y_P,x,y)}
    .
\]
Using
the relations $y_P^2 = f(x_P)$ and $y^2 = f(x)$
(and the fact that correspondences cut out by principal ideals
induce the zero homomorphism),
we can easily replace $A$ and $B$ with 
Cantor-reducible generators 
to 
derive the Mumford representation of $\phi(P)$,
and thus the $\phi$-division polynomials.

\subsection{A $1$-dimensional Family with RM by $\Z[\big(1+\sqrt{5}\big)/2]$}
\label{sec:CTfamily}

\newcommand{\TTV}[1]{\ensuremath{{#1}_{\mathrm{T}}}}

Let $t$ be a free parameter,
and suppose that $q$ is not a power of $5$.
Let $\TTV{\family{C}}$ be the family of curves of genus $2$ over $\F_{q}$
considered by Tautz, Top, and Verberkmoes
in~\cite[Example 3.5]{Tautz--Top--Verberkmoes},
defined by
\[
    \TTV{\family{C}}: y^2 = x^5 - 5x^3 + 5x + t .
\]
Let $\tau_5 = \zeta_5 + \zeta_5^{-1}$,
where $\zeta_5$ is a $5$th root of unity in $\Fbar_{q}$.
Let
$\TTV{\phi}$
be the endomorphism induced by
the (constant) family of 
correspondences 
\[
    \TTV{\family{R}} 
    = 
    \variety{ x_1^2 + x_2^2 - \tau_5 x_1x_2 + \tau_5^2 - 4 , y_1 - y_2 }
    \subset \XxX{\TTV{\family{C}}}
    .
\]
(Note that \(\TTV{\family{R}}\) and $\TTV{\phi}$
are defined over $\F_{q}(\tau_5)$,
which is equal to $\F_{q}$ 
if and only if $q \not\equiv\pm2\bmod5$.)
The family $\TTV{\family{C}}$ has an unique point $P_\infty$ at
infinity,
which we can use to define an embedding
\[
    P = (x_P,y_P) 
    \longmapsto 
    D_P := [(P) - (P_\infty)]
    \leftrightarrow 
    (x - x_P, y - y_P) 
\]
of $\TTV{\family{C}}$ in $\Jac{\TTV{\family{C}}}$.
With respect to this embedding,
the $\TTV{\phi}$-division polynomials are 
\[
    d_2 = 1 ,
    \quad 
    d_1 = -\tau_5 x ,
    \quad 
    d_0 = x^2 + \tau_5^2 - 4 ,
    \quad 
    e_2 = 1,  
    \quad 
    e_1 = 0,  
    \quad 
    e_0 = 1. 
\]

\begin{proposition}
\label{proposition:TTV}
    The minimal polynomial of \(\TTV{\phi}\) 
    is \(T^2 + T - 1\):
    that is, \(\TTV{\phi}\)
    acts as multiplication by $-(1 + \sqrt{5})/2$
    on~\(\Jac{\TTV{\family{C}}}\).
    A prime \(\ell\)
    splits into two principal ideals in \(\Z[\TTV{\phi}]\)
    if and only if \(\ell \equiv \pm1\bmod 5\).
\end{proposition}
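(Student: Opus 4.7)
The plan is to prove part (1) by verifying the identity $\TTV{\phi}^2 + \TTV{\phi} - [1] = 0$ on the generic divisor class $D_P = [(P) - (P_\infty)]$; this suffices because the image of the Abel--Jacobi embedding generates $\Jac{\TTV{\family{C}}}$ as an abelian group, so an endomorphism vanishing on all such $D_P$ is the zero endomorphism. Unpacking the correspondence, $\TTV{\phi}(D_P) = [(Q_1) + (Q_2) - 2(P_\infty)]$ where $Q_1, Q_2$ share the $y$-coordinate $y_P$ and have $x$-coordinates the two roots of $x^2 - \tau_5 x_P x + (x_P^2 + \tau_5^2 - 4)$; this immediately recovers the stated $\TTV{\phi}$-division polynomials. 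Applying $\TTV{\phi}$ once more and using $\Z$-linearity yields $\TTV{\phi}^2(D_P) = \sum_{i,j}[(R_{i,j}) - (P_\infty)]$, where the four $R_{i,j}$ again lie on $y = y_P$ and their $x$-coordinates are the roots of $\prod_{i=1,2}(x^2 - \tau_5 x_{Q_i} x + x_{Q_i}^2 + \tau_5^2 - 4)$.

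The next step is to expand this quartic symmetrically, substituting $x_{Q_1} + x_{Q_2} = \tau_5 x_P$ and $x_{Q_1}x_{Q_2} = x_P^2 + \tau_5^2 - 4$, and then reduce repeatedly using $\tau_5^2 = 1 - \tau_5$. After this reduction, the identity to verify becomes the linear equivalence of divisors
\[
    (R_{1,1}) + (R_{1,2}) + (R_{2,1}) + (R_{2,2}) + (Q_1) + (Q_2) \sim (P) + 5(P_\infty)
\]
on $\TTV{\family{C}}$; I would establish this by exhibiting an explicit rational function witnessing it, naturally of the shape $(y - g(x))/(x - x_P)$ for some cubic $g \in \F_q(\tau_5)[x_P][x]$, since all six effective points share the common $y$-coordinate $y_P$ and the pole at $P$ has order~$1$. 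The main obstacle is this symmetric-function bookkeeping: the coefficients stay manageable only because of the identity $\tau_5^2 + \tau_5 - 1 = 0$, which is morally the same relation we are trying to lift to $\Jac{\TTV{\family{C}}}$.

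For part~(2), the ring $\Z[\TTV{\phi}] \isom \Z[(1+\sqrt{5})/2]$ is the maximal order of $\Q(\sqrt{5})$ and has class number $1$, so every ideal is principal. Hence $\ell$ splits into two principal ideals in $\Z[\TTV{\phi}]$ if and only if the minimal polynomial $T^2 + T - 1$ has a root modulo $\ell$, equivalently $5$ is a quadratic residue modulo $\ell$. Since $\Q(\sqrt{5})$ is the unique real quadratic subfield of the cyclotomic field $\Q(\zeta_5)$, this is equivalent to $\ell$ lying in the kernel of the quadratic character of $(\Z/5\Z)^\times$, namely $\ell \equiv \pm 1 \pmod{5}$.
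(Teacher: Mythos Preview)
Your approach is the same as the paper's: verify \(\TTV{\phi}^2+\TTV{\phi}=[1]\) on the image of the Abel--Jacobi map by exhibiting an explicit principal divisor. The paper does this with the pair \((P)-(Q)\) and the witness \((y-y(P))/(y-y(Q))\); your one-point version \((P)-(P_\infty)\) is equivalent, and part~(2) matches the paper.

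The one concrete misstep is your guessed shape for the witness. A function of the form \((y-g(x))/(x-x_P)\) with \(g\) cubic cannot vanish at six points sharing the same \(y\)-coordinate \(y_P\): that would force \(g(x)=y_P\) at six distinct values of \(x\), impossible for a cubic unless \(g\equiv y_P\). In fact there are only five points on \(\TTV{\family{C}}\) with \(y=y_P\) (the roots of the degree-\(5\) polynomial \(f(x)-y_P^2\)), so the six-point divisor \(\sum(R_{i,j})+\sum(Q_k)\) must have a repetition. The symmetric-function computation you outline shows that the repeated point is \(P\) itself: one checks the polynomial identity
\[
    \prod_{i,k}(x-x_{R_{i,k}})\cdot\prod_k(x-x_{Q_k}) \;=\; (x-x_P)\bigl(f(x)-f(x_P)\bigr),
\]
so the ``pole at \(P\)'' you anticipated is already cancelled, and the correct witness is simply \(y-y_P\), whose divisor is \(\sum_{i=1}^5(P_i)-5(P_\infty)\). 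This is exactly the identity the paper records (in the \((P)-(Q)\) form). Once you recognise this, the bookkeeping you flag as the main obstacle evaporates.
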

\begin{proof}
    The first claim is proven
    in~\cite[\S3.5]{Tautz--Top--Verberkmoes}.
    More directly,
    if \(P\) and \(Q\)
    are generic points of \(\TTV{\family{C}}\),
    then on the level of divisors we find
    \[
        (\TTV{\phi}^2 + \TTV{\phi})((P) - (Q)) 
        = 
        (P) - (Q) + \mathrm{div}\left(\frac{y - y(P)}{y - y(Q)}\right).
    \]    
    Hence \(\Z[\TTV{\phi}]\)
    is isomorphic to the ring of integers of \(\Q(\sqrt{5})\).
    The primes~\(\ell\) splitting in \(\Q(\sqrt{5})\)
    are precisely those congruent to \(\pm 1\) modulo \(5\);
    and \(\Q(\sqrt{5})\) has class number \(1\),
    so the primes over \(\ell\) are principal.
    \qed
\end{proof}

The Igusa invariants of $\TTV{\family{C}}$,
viewed as a point in weighted projective space, are
\( ( 140 : 550 : 20(32t^2 - 3) : 25(896t^2 - 3109) : 64(t^2-4)^2) \);
in particular,
$\TTV{\family{C}}$
has a one-dimensional image in the moduli space of curves of genus~$2$.
The Jacobian of the curve with the same defining equation over $\Q(t)$
is absolutely simple (cf.~\cite[Remark 15]{Kohel--Smith}).

\subsection{A $2$-dimensional Family with RM by $\Z[\big(1+\sqrt{5}\big)/2]$}
\label{sec:MestreFive}

\newcommand{\MestreFive}[1]{\ensuremath{{#1}_{\mathrm{H}}}}

Let $s$ and $t$ be free parameters,
and consider the family of curves
\( \MestreFive{\family{C}} : y^2 = \MestreFive{F}(x) \),
where
\[
    \MestreFive{F}(x)
    =
    sx^5 - (2s + t)x^4 + (s^2 + 3s + 2t - 1)x^3 - (3s + t - 3)x^2 + (s - 3)x + 1
    .
\]
This family is essentially due to Humbert;
it is equal to the family of Mestre~\cite[\S2.1]{Mestre}
with \( (U,T) = (s,t) \),
and the family of Wilson~\cite[Proposition 3.4.1]{Wilson}
with \( (A,B) = (s,-t-3s+3) \).
The family has a full $2$-dimensional image
in the moduli space of genus $2$ curves.

Let $\MestreFive{\family{R}}$ be the family of correspondences
on \( \XxX{\MestreFive{\family{C}}} \)
defined by
\[
    \MestreFive{\family{R}}
    =
    \variety{ x_1^2x_2^2 + s(s-1)x_1x_2 - s^2(x_1 - x_2) + s^2 , y_1 - y_2 }
    ;
\]
let \(\MestreFive{\phi}\)
be the induced endomorphism.
The family $\MestreFive{\family{C}}$ has a
unique point $P_\infty$ at infinity,
which we can use to 
define an embedding
\[
    P = (x_P,y_P) 
    \longmapsto 
    D_P := [(P) - (P_\infty)]
    \leftrightarrow 
    (x - x_P, y - y_P) 
\]
of $\MestreFive{\family{C}}$ in $\Jac{\MestreFive{\family{C}}}$.
With respect to this embedding,
the \(\MestreFive{\phi}\)-division polynomials
are 
\[
    d_2 = x^2,
    \ \ 
    d_1 = (s^2-s)x + s^2,
    \ \ 
    d_0 = -s^2x + s^2, 
    \ \ 
    e_2 = 1,
    \ \ 
    e_1 = 0,
    \ \ 
    e_0 = 1.
\]
\begin{proposition}
\label{proposition:MestreFive}
    The minimal polynomial of \(\MestreFive{\phi}\)
    is \(T^2 + T - 1\):
    that is, \(\MestreFive{\phi}\)
    acts as multipliction by $-(1 + \sqrt{5})/2$
    on~$\Jac{\MestreFive{\family{C}}}$.
    A prime \(\ell\)
    splits into two principal ideals in \(\Z[\MestreFive{\phi}]\)
    if and only if \(\ell \equiv \pm 1 \bmod 5\).
\end{proposition}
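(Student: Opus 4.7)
The plan is to follow the template used for Proposition~\ref{proposition:TTV}. First, I would reduce the claim that $\MestreFive{\phi}$ has minimal polynomial $T^2 + T - 1$ to showing that, for generic points $P$ and $Q$ on $\MestreFive{\family{C}}$, the degree-zero divisor
\[
    (\MestreFive{\phi}^2 + \MestreFive{\phi})((P) - (Q)) - ((P) - (Q))
\]
is principal; since classes of the form $[(P) - (Q)]$ generate $\Jac{\MestreFive{\family{C}}}$, this suffices to establish $\MestreFive{\phi}^2 + \MestreFive{\phi} - 1 = 0$ in $\End(\Jac{\MestreFive{\family{C}}})$. The key observation setting up the calculation is that the component $y_1 - y_2$ of the defining ideal of $\MestreFive{\family{R}}$ forces $\MestreFive{\phi}$ to preserve $y$-coordinates, so that both $\MestreFive{\phi}((P))$ and $\MestreFive{\phi}^2((P))$ are effective divisors (of degrees $2$ and $4$) supported on the five points of $\MestreFive{\family{C}}$ whose $y$-coordinate equals $y_P$ — precisely the affine zero locus of the function $y - y_P$.

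By analogy with the $\TTV{\family{R}}$ case, I expect the principal function to be $g_{P,Q} = (y - y_P)/(y - y_Q)$, which amounts to proving that the degree-$6$ effective divisor $(\MestreFive{\phi}^2 + \MestreFive{\phi})((P))$ equals $2(P) + \sum_{i=1}^{4}(\alpha_i, y_P)$, where $\alpha_1,\ldots,\alpha_4$ are the four roots of $\MestreFive{F}(x) - y_P^2$ other than $x_P$. Using the division polynomials in the excerpt, I would extract the two roots $\beta_1, \beta_2$ of $A(x_P, x) = x_P^2 x^2 + (s(s-1)x_P + s^2)x + s^2(1 - x_P)$ via Vieta, iterate to obtain the four roots of $A(\beta_j, x) = 0$ giving the $x$-coordinates of $\MestreFive{\phi}^2((P))$, and then compare the elementary symmetric functions of these six values with those of $\{x_P, x_P, \alpha_1, \ldots, \alpha_4\}$ inside $\Q(s,t)[x_P, y_P]/(y_P^2 - \MestreFive{F}(x_P))$; this reduces to a routine resultant identity dispatchable by hand or with a computer algebra system. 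The main obstacle will be the lack of symmetry of $A(x_1, x_2)$ in its two arguments, which blocks the short self-transpose argument used for $\TTV{\family{R}}$ and forces an explicit expansion of the iterated correspondence. An attractive alternative is to invoke Mestre~\cite{Mestre} and Wilson~\cite{Wilson}, who already identify $\MestreFive{\family{C}}$ with Humbert's surface of discriminant~$5$ and hence guarantee $\Z[(1+\sqrt{5})/2] \hookrightarrow \End(\Jac{\MestreFive{\family{C}}})$; one then only needs to pin down $\MestreFive{\phi}$ as a generator of the RM subring, which follows from its (nontrivial) action on a single holomorphic differential.

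Once the first claim is in place, $\Z[\MestreFive{\phi}]$ is the maximal order of $\Q(\sqrt{5})$, which has class number~$1$ and discriminant~$5$. The splitting claim then follows from the standard splitting law: $T^2 + T - 1$ splits over $\F_\ell$ exactly when its discriminant $5$ is a square modulo $\ell$, and by quadratic reciprocity (using $5 \equiv 1 \bmod 4$) this is equivalent to $\ell$ being a square modulo~$5$, i.e.\ to $\ell \equiv \pm 1 \bmod 5$; principality of the factors is automatic from class number~$1$.
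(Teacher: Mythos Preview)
Your proposal is correct, and in fact your ``attractive alternative'' is exactly the route the paper takes: the first assertion is dispatched by a bare citation of \cite[Proposition~2]{Mestre} (the case $n=5$), and the splitting/principality claim is handled by the same class-number-$1$ argument for $\Q(\sqrt{5})$ that you give, referring back to Proposition~\ref{proposition:TTV}. The explicit divisor computation you outline as your primary plan---verifying that $(\MestreFive{\phi}^2+\MestreFive{\phi})((P)-(Q))$ differs from $(P)-(Q)$ by $\mathrm{div}\big((y-y_P)/(y-y_Q)\big)$ via symmetric-function identities in the iterated correspondence---would also work and is in the spirit of the direct verifications carried out for $\TTV{\phi}$ and $\MestreTwo{\phi}$, but the paper does not attempt it here; your anticipated obstacle (the asymmetry of the polynomial $A(x_1,x_2)$ defining $\MestreFive{\family{R}}$) is presumably why.
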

\begin{proof}
    The first assertion is~\cite[Proposition 2]{Mestre}
    with \(n = 5\);
    the rest of the proof is exactly as in
    Proposition~\ref{proposition:TTV}.
    \qed
\end{proof}    

\subsection{A $2$-dimensional Family with RM by $\Z[\sqrt{2}]$}
\label{sec:MestreTwo}

\newcommand{\MestreTwo}[1]{\ensuremath{{#1}_{\mathrm{M}}}}

For an example with $\Delta = 8$,
we present a twisted and reparametrized version
of a construction due to Mestre~\cite{Mestre-09}.
Let $s$ and $t$ be free parameters,
let $v(s)$ and $n(s)$ be the rational functions
\[
    v = v(s) := \frac{s^2+2}{s^2-2} 
    \quad \text{and}\quad 
    n = n(s) := \frac{4s(s^4+4)}{(s^2-2)^3} ,
\]
and let $\MestreTwo{\family{C}}$ be the family of curves
defined by
\[
    \MestreTwo{\family{C}} 
    : 
    y^2 
    = 
    \MestreTwo{F}(x)
    := 
    (vx - 1)(x-v)(x^4 - tx^2 + vt - 1) 
    .
\]
The family 
of correspondences 
on \(\XxX{\MestreTwo{\family{C}}}\)
defined by
\[
    \MestreTwo{\family{R}}
    =
    V\left(\begin{array}{l}
        x_1^2x_2^2 - v^2(x_1^2+x_2^2) + 1,
        \\
        y_1y_2-n(x_1^2+x_2^2-t)(x_1x_2-v(x_1+x_2)+1)
    \end{array}\right)
\]
induces an endomorphism 
\( \MestreTwo{\phi} \)
of $\Jac{\MestreTwo{\family{C}}}$.

The family $\MestreTwo{\family{C}}$
has two points at infinity, $P_\infty^+$ and $P_\infty^-$.
which are generically only defined over a quadratic extension of
$\F_{q}(s,t)$.
Let $D_\infty = (P_\infty^+) + (P_\infty^-)$ 
denote the divisor at infinity.
We can use the rational Weierstrass point $P_v = (v,0)$ 
on $\MestreTwo{\family{C}}$ 
to define an embedding
\[
    P = (x_P,y_P)
    \longmapsto
    D_P :=
    [ (P) + (P_v) - D_\infty ]
    \leftrightarrow
    \Big((x-x_P)(x - v), y - \frac{y_P}{x_P-v}(x-v)\Big)
\]
of $\MestreTwo{\family{C}}$ in $\Jac{\MestreTwo{\family{C}}}$
(the appropriate composition and reduction algorithms for
divisor class arithmetic on genus 2 curves with an even-degree model,
such as $\Jac{\MestreTwo{\family{C}}}$,
appear in~\cite{Galbraith--Harrison--Mireles-Morales}.)
With respect to this embedding,
the $\MestreTwo{\phi}$-division polynomials are 
\[
\begin{array}{r@{\;=\;}l@{\qquad}r@{\;=\;}l}
    d_2  &  x^2 - v^2  ,
    &
    e_2  &  (x^2 - v^2)\MestreTwo{F}(x) , 
    \\
    d_1  &  0 ,
    &
    e_1  &  n(x-v)(x^4 - tx^2 + tv^2 - 1) ,
    \\
    d_0  &  -v^2x^2 + 1 ,
    &
    e_0  &  n(vx - 1)(x^4 - tx^2 + tv^2 - 1)  .
\end{array}
\]

\begin{proposition}
\label{proposition:MestreTwo}
    The minimal polynomial of \(\MestreTwo{\phi}\)
    is \(T^2 - 2\):
    that is, 
    \(\MestreTwo{\phi}\) acts as multiplication by $\sqrt{2}$ 
    on $\Jac{\MestreTwo{\family{C}}}$.
    A prime \(\ell\)
    splits into two principal ideals in \(\Z[\MestreTwo{\phi}]\)
    if and only if \(\ell\equiv\pm1\bmod{8}\).
\end{proposition}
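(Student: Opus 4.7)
The plan is to follow the template established by Propositions~\ref{proposition:TTV} and~\ref{proposition:MestreFive}: split the statement into the minimal polynomial identity for $\MestreTwo{\phi}$ and the splitting/principality assertion for primes $\ell$, and handle them separately.

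For the first claim, the cleanest route is to invoke Mestre~\cite{Mestre-09}, where the correspondence underlying $\MestreTwo{\family{R}}$ is constructed precisely so that the induced endomorphism satisfies $\MestreTwo{\phi}^2 = [2]$; the twist and rational reparametrization we have applied preserve the isomorphism class of the induced endomorphism ring, so the minimal polynomial is unchanged. Alternatively, one can verify $\MestreTwo{\phi}^2 = [2]$ directly by a divisor calculation: starting from a generic point $P = (x_P,y_P)$ of $\MestreTwo{\family{C}}$, expand the ideal defining $\MestreTwo{\phi}(D_P)$ using the $\MestreTwo{\phi}$-division polynomials listed above, apply $\MestreTwo{\phi}$ once more to the resulting effective representative, and reduce modulo the curve equations $y_i^2 = \MestreTwo{F}(x_i)$ using the Cantor reduction algorithm for even-degree models from~\cite{Galbraith--Harrison--Mireles-Morales}. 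The output should be linearly equivalent to $2D_P$, witnessed by an explicit rational function on $\MestreTwo{\family{C}}$ in analogy with the expression $(y - y(P))/(y - y(Q))$ used in the proof of Proposition~\ref{proposition:TTV}. Either route identifies $\Z[\MestreTwo{\phi}]$ with $\Z[\sqrt{2}]$, consistent with the stated $\D = 8$.

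For the second claim, observe that $\Z[\sqrt{2}]$ is the full ring of integers of $\Q(\sqrt{2})$ and has class number~$1$, so splitting in $\Z[\MestreTwo{\phi}]$ coincides with splitting in $\Q(\sqrt{2})$ and every prime ideal above a split rational prime is automatically principal. An odd prime $\ell$ splits in $\Q(\sqrt{2})$ if and only if $T^2 - 2$ factors modulo $\ell$, i.e.\ if and only if $2$ is a quadratic residue modulo $\ell$; by the second supplementary law of quadratic reciprocity, this is equivalent to $\ell \equiv \pm 1 \bmod 8$. The prime $\ell = 2$ ramifies and may be treated separately.

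The main obstacle is the first claim: a first-principles verification of $\MestreTwo{\phi}^2 = [2]$ is a nontrivial divisor-theoretic computation on the even-degree model, complicated by the fact that $D_\infty$ has degree~$2$ and the embedding $P \mapsto D_P$ mixes in the auxiliary Weierstrass point $P_v$, so the Mumford coordinates of $\MestreTwo{\phi}^2(D_P)$ are not nearly as clean as in the odd-degree settings of \S\ref{sec:CTfamily} and~\S\ref{sec:MestreFive}. The pragmatic strategy is therefore to reduce to Mestre's construction by matching our twisted family with his, confining the hands-on work to checking that twisting and rational reparametrization commute with the formation of the induced endomorphism.
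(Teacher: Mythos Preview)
Your proposal is correct, and the splitting/principality half is handled exactly as the paper does. For the first half you offer two routes and ultimately recommend reducing to Mestre~\cite{Mestre-09}; the paper instead takes your alternative route and carries out the direct divisor computation, writing down the explicit principal witness
\[
    \MestreTwo{\phi}^2\big((P)-(Q)\big) = 2(P)-2(Q) + \mathrm{div}\!\left(\frac{x+x(P)}{x+x(Q)}\right),
\]
so the analogue of $(y-y(P))/(y-y(Q))$ here is $(x+x(P))/(x+x(Q))$. Note that this calculation is done with the divisor class $(P)-(Q)$ rather than with the Mumford-embedded $D_P$, which sidesteps the even-degree and auxiliary-$P_v$ complications you flag as the main obstacle; the embedding only matters for writing down the division polynomials, not for verifying $\MestreTwo{\phi}^2=[2]$. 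Your Mestre-reduction approach would also work and has the advantage of outsourcing the lengthy symbolic check, at the cost of importing the twist-compatibility verification; the paper's direct route is self-contained and pins down the rational function explicitly.
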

\begin{proof}
    Let \(P\) and \(Q\)
    be generic points of \MestreTwo{\family{C}}.
    An elementary but lengthy calculation shows that
    on the level of divisors,
    \[
        \MestreTwo{\phi}^2((P) - (Q))
        =
        2(P)-2(Q) + \mathrm{div}\left(\frac{x+x(P)}{x+x(Q)}\right)
        ,
    \]
    so
    \(\MestreTwo{\phi}^2([D]) = 2[D]\) for all \([D]\) in
    \(\Pic[0]{\MestreTwo{\family{C}}}\).
    Hence \(\MestreTwo{\phi}^2 = [2]\),
    and $\Z[\MestreTwo{\phi}]$ is isomorphic to
    the maximal order of $\Q(\sqrt{2})$.
    The primes \(\ell\) splitting in $\Q(\sqrt{2})$
    are precisely those congruent to \(\pm 1\) modulo \(8\);
    further, $\Q(\sqrt{2})$ has class number 1,
    so the primes over \(\ell\) are principal.
    \qed
\end{proof}
\begin{remark}
    As noted above,
    this construction is a twisted reparametrization
    of a family of isogenies described by
    Mestre in~\cite[\S2.1]{Mestre-09}.
    Let \(a_1\) and \(a_2\)
    be the roots of $T^2 - tT + v^2t-1$ in $\overline{\F_{q}(v,t)}$.
    Mestre's curves \(C'\) and \(C\)
    are equal 
    (over \(\F_{q}(v,a_1,a_2)\))
    to 
    our \(\MestreTwo{\family{C}}\) 
    and its quadratic twist by 
    \(A = 2(v^2-1)(v^2+1)^2 = (2n)^2 \),
    respectively.
    We may specialize the proofs in~\cite{Mestre-09}
    to show that $\MestreTwo{\family{C}}$
    has a two-dimensional image in the moduli space of curves of genus $2$,
    and that the Jacobian of the curve with the same defining equation over $\Q(s,t)$
    is absolutely simple.
    Constructions of curves with RM by \(\Z[\sqrt{2}]\)
    are further investigated in Bending's thesis~\cite{Bending}.
\end{remark}

\begin{remark}
    The algorithms described here should be readily adaptable
    to work with Kummer surfaces instead of Jacobians.
    In the notation of~\cite{Gaudry-theta},
    the Kummers with parameters $(a,b,c,d)$ 
    satisfying $b^2 = a^2-c^2-d^2$
    have RM by $\Z[\sqrt{2}]$,
    which can be made explicit as follows: 
    the doubling algorithm decomposes into two identical steps, 
    since $(A:B:C:D) = (a:b:c:d)$, 
    and the components after one step are 
    the coordinates of a Kummer point.
    The step therefore defines an efficiently computable endomorphism 
    which squares to give multiplication by 2.
\end{remark}    

\section{Numerical Experiments}

We implemented our algorithm in C++ using the NTL library. 
For non-critical steps, including computations in quadratic fields, we
used Magma for simplicity. With this implementation, 
the determination of $\chi(T)$ for a curve over a 128-bit prime field 
takes approximately 3 hours on one core of a Core2 processor at 2.83 GHz. 
This provides a proof of concept rather than an optimized implementation. 

\subsection{Cryptographic Curve Generation}

When looking for a cryptographic curve, we used an early-abort 
strategy, where we switch to another curve as soon as either the 
order of the Jacobian order or its twist can not be prime. 
Using our adapted version of Schoof algorithm, we guarantee that 
the group orders are not divisible by any prime that splits in the real
field up to the CRT bound used.

In fact, any prime that divides the group order of a curve having RM 
by the maximal order of $\Q(\sqrt{\D})$ must either be a split (or 
ramified) prime, or divide it with multiplicity 2. 
As a consequence, the early abort strategy works much better than in 
the classical Schoof algorithm, because, it suffices to test half the 
number of primes up to our CRT bound. 

We ran a search for a secure curve over a prime field of 128 bits, using 
a CRT bound of 131.
Our series of computations frequently aborted early, and resulted in 245 
curves for which $\chi(T)$ was fully determined, and for which neither 
the group order nor its twist was divisible by a prime less than 131. 
Considering these twists, this provided 490 group orders, of which 27 were 
prime, and therefore 
suitable for cryptographic use.
We give here the data for one of these curves, that was furthermore
twist-secure: both the Jacobian and the twist Jacobian order are prime.

Let $\C/\F_q$, where $q=2^{128}+573$, be the curve in the family 
$\TTV{\family{C}}$ of \S\ref{sec:CTfamily} specialized to 
$t=75146620714142230387068843744286456025$.  The characteristic 
polynomial $\chi(T)$ is determined by 
$$ 
(s_1,s_2) = (-26279773936397091867,\;
-90827064182152428161138708787412643439),
$$
giving prime group orders for the Jacobian: 
$$
115792089237316195432513528685912298808995809621534164533135283195301868637471,
$$
and for its twist:
$$
115792089237316195414628441331463517678650820031857370801365706066289379517451.
$$
We note that correctness of the orders is easily verified on 
random points in the Jacobians. 

\subsection{A Kilobit Jacobian}

Let $q$ be the prime $2^{512}+1273$, 
and consider the curve over $\F_{q}$ from the family
$\TTV{\family{C}}$ of \S\ref{sec:CTfamily}
specialized at
\[
    \begin{array}{rl}
    t = & 
    2908566633378727243799826112991980174977453300368095776223\\ &
    2569868073752702720144714779198828456042697008202708167215\\ &
    32434975921085316560590832659122351278 .
    \end{array}
\]
This value of $t$ was randomly chosen, and carries no special structure.
We computed the values of the pair $(s_1\bmod\ell,n\bmod\ell)$ for this curve
for each split prime $\ell$ up to $419$; this was enough to 
uniquely determine the true value of $(s_1, n)$ using the Chinese Remainder
Theorem. 
The numerical data for the curve follows:
\[
    \begin{array}{r@{\;}c@{\;}l}
    \D & = & 5 \\
    s_1 & = &  
    -10535684568225216385772683270554282199378670073368228748\\ & & 
    7810402851346035223080\\
    n & = &
    -37786020778198256317368570028183842800473749792142072230\\ & & 
    993549001035093288492\\
    s_2 & = & (s_1^2 - n^2\D)/4 \\
        & = &
    990287025215436155679872249605061232893936642355960654938\\ & & 
    008045777052233348340624693986425546428828954551752076384\\ & & 
    428888704295617466043679591527916629020\\
    \end{array}
\]
The order of the Jacobian is therefore
\[
    \begin{array}{r@{\;}c@{\;}l}
    N & = & (1+q)^2 - s_1(1+q) + s_2\\
        & = &
    179769313486231590772930519078902473361797697894230657273\\ & & 
    430081157732675805502375737059489561441845417204171807809\\ & & 
    294449627634528012273648053238189262589020748518180898888\\ & & 
    687577372373289203253158846463934629657544938945248034686\\ & & 
    681123456817063106485440844869387396665859422186636442258\\ & & 
    712684177900105119005520.
    \end{array}
\]
The total runtime for this computation was about 80 days on a single core
of a Core 2 clocked at 2.83 GHz. In practice, we use the inherent
parallelism of the algorithm, running one prime~$\ell$ on each available core.

We did not 
compute the characteristic polynomial modulo small prime powers
(as in~\cite{GaSc10}),
nor did we use BSGS to
deduce the result from partial modular information
as in \S\ref{sec:search-space}
(indeed, we were more
interested in measuring the behaviour of our algorithm for large values
of~$\ell$).
These improvements with an exponential-complexity 
nature bring much less than in the classical point
counting algorithms, since they have to be balanced with a
polynomial-time algorithm with a lower degree. 
For this example, we
estimate that BSGS and small prime powers could have saved a factor
of about 2 in the total runtime.

\subsection{Degrees of Division Polynomials}

For each prime $\ell$ splitting in $\Z[\TTV{\phi}]$, 
we report the degree of the $\alpha$-division polynomial $d_2$
(where $\alpha$ is the endomorphism of norm $\ell$ that was used).
By Lemma~\ref{lemma:generator-reduction}, 
$\deg(d_2)$ is in $O(\ell)$; 
the table below gives the ratio $\deg(d_2)/\ell$, 
thus measuring the hidden constant in the $O()$ notation. 
\begin{center}
\begin{tabular}{|c||c|c|c|c|c|c|c|c|c|c|c|c|c|c|c||}
\hline $\ell$ &
11 & 19 & 29 & 31 & 41 & 59 & 61 & 71 & 79 & 89 & 101 & 109 & 131 \\
\hline $\deg d_2 / \ell$ &
1.82 & 2.05 & 2.07 & 1.94 & 2.05 & 2.10 & 1.97 & 2.03 & 2.01 & 2.02 &
1.98 & 2.02 & 2.02 \\
\hline
\hline $\ell$ &
139 & 149 & 151 & 179 & 181 & 191 & 199 & 211 & 229 & 239 & 241 & 251 &
269 \\ 
\hline $\deg d_2 / \ell$ &
2.12 & 2.04 & 1.99 & 2.00 & 2.01 & 2.09 & 2.21 & 1.99 & 2.18 & 2.01 &
2.05 & 2.07 & 2.17 \\
\hline
\hline $\ell$ &
271 & 281 & 311 & 331 & 349 & 359 & 379 & 389 & 401 & 409 & 419 & & \\
\hline $\deg d_2 / \ell$ &
2.01 & 1.99 & 2.11 & 2.12 & 2.13 & 2.02 & 2.00 & 2.16 & 2.03 & 2.10 &
2.00 & & \\
\hline
\end{tabular}
\end{center}
We have 
$\deg(d_1) = \deg(d_2)+1$ and $\deg(d_0) = \deg(d_2)+2$. 
All of these degrees depend only on the curve family $\TTV{\family{C}}$,
and not on the individual curve chosen.


\end{document}